\newtheorem*{corollary*}{Corollary}
\newtheorem*{conjecture*}{Conjecture}
\newtheorem*{example*}{Example}
\newtheorem*{theorem*}{Theorem}
\newtheorem*{proposition*}{Proposition}
\newtheorem{theorem}{Theorem}[section]
\newtheorem{corollary}[theorem]{Corollary}
\newtheorem{lemma}[theorem]{Lemma}
\newtheorem{problem}[theorem]{Problem}
\newtheorem*{claim*}{Claim}
\theoremstyle{definition}
\newtheorem{example}[theorem]{Example}
\theoremstyle{remark}
\numberwithin{equation}{section}
\renewcommand*\env@matrix[1][\
arraystretch]{%
  \edef\arraystretch{#1}%
  \hskip -\arraycolsep
  \let\@ifnextchar\new@ifnextchar
  \array{*\c@MaxMatrixCols c}}
\renewcommand{\mod}{\operatorname{mod}}
\newcommand{\Ext}{\operatorname{Ext}}
\newcommand{\CM}{\operatorname{CM}}
\newcommand{\Hom}{\operatorname{Hom}}
\begin{document}

\title{On self-orthogonal modules in Iwanaga-Gorenstein rings}
\date{\today}

\subjclass[2010]{Primary 16G10, 16E10}

\keywords{self-orthogonal module, Iwanaga-Gorenstein ring}

\author{Ren\'{e} Marczinzik}
\address[Marczinzik]{Mathematical Institute of the University of Bonn, Endenicher Allee 60, 53115 Bonn, Germany}
\email{marczire@math.uni-bonn.de}
\begin{abstract}
Let $A$ be an Iwanaga-Gorenstein ring. Enomoto conjectured that a self-orthogonal $A$-module has finite projective dimension. We prove this conjecture for $A$ having the property that every indecomposable non-projective maximal Cohen-Macaulay module is periodic. This answers a question of Enomoto and shows the conjecture for monomial quiver algebras and hypersurface rings.
\end{abstract}

\maketitle

\section*{Introduction}
We assume always that $A$ is a two-sided noetherian semiperfect ring and all modules are finitely generated right modules unless otherwise stated.
Recall that $A$ is called \emph{$n$-Iwanaga-Gorenstein} if the injective dimensions of $A$ as a left and right module are equal to $n$. If the $n$ does not matter we will often just say Iwanaga-Gorenstein ring instead of $n$-Iwanaga-Gorenstein ring. The category of \emph{maximal Cohen-Macaulay modules} $\CM A$ of a $n$-Iwanaga-Gorenstein ring is defined as the category of n-th syzygy modules $\Omega^n(\mod A)$ consisting of modules $X$ that are projective or direct summands of a module of the form $\Omega^n(M)$ for some $M \in \mod A$.
A module $M$ is called \emph{self-orthogonal} if $\Ext_A^i(M,M)=0$ for all $i \geq 1$.
The definition of Iwanaga-Gorenstein rings includes the classical cases of Iwanaga-Gorenstein rings, namely the commutative local Gorenstein rings and the Iwanaga-Gorenstein Artin algebras. 

We are interested in the following problem that was stated in \cite{E} as conjecture 4.8 for Artin algebras. 
\begin{problem}
Let $A$ be Iwanaga-Gorenstein and let $M$ be self-orthogonal. Then $M$ has finite projective dimension.

\end{problem}

A positive solution of this problem would have important consequences for the theory of tilting modules for Iwanaga-Gorenstein Artin algebras, see section 3 and 4 in \cite{E}.
For Artin algebras the conjecture of Enomoto is a generalisation of the classical Tachikawa conjecture that states that a self-orthogonal module over a selfinjective algebra is projective. The Tachikawa conjecture can be seen as one of the most important homological conjectures for Artin algebras since a counterexample to the Tachikawa conjecture would give counterexamples to other homological conjectures such as the Nakayama conjecture, the Auslander-Reiten conjecture and the finitistic dimension conjecture, see for example \cite{Y} for a survey on those conjectures.
Our main result gives a positive answer to the above problem for an important class of Iwanaga-Gorenstein algebras:
\begin{theorem} 
Let $A$ be an $n$-Iwanaga-Gorenstein ring such that every indecomposable non-projective module $X \in \CM A$ is periodic. Assume $M$ has the property that $\Ext_A^u(M,M)=0$ for all $u>n$. Then $M$ has finite projective dimension. In particular, all self-orthogonal modules have finite projective dimension. 

\end{theorem}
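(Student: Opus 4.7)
The strategy is to pass to the $n$-th syzygy $N := \Omega^n(M)$, which lies in $\CM A$, and to use the periodicity hypothesis on its non-projective part to force $N$ itself to be projective; this will bound $\pdim M$ by $n$.

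I would first establish the comparison $\Ext_A^{j}(N,N) \cong \Ext_A^{j}(M,M)$ for all $j \geq n+1$. The standard dimension shift in the first argument gives $\Ext_A^{i}(N, M) \cong \Ext_A^{i+n}(M,M)$ for $i \geq 1$. For the analogous shift in the second argument, I would apply $\Hom_A(N,-)$ to the syzygy short exact sequences $0 \to \Omega^{k+1}M \to P_k \to \Omega^k M \to 0$; since $N \in \CM A$ over an $n$-IG ring forces $\Ext_A^{\geq 1}(N, A) = 0$, these sequences produce isomorphisms $\Ext_A^{i}(N, \Omega^k M) \cong \Ext_A^{i+1}(N, \Omega^{k+1} M)$ for $i \geq 1$, and iterating $n$ times yields $\Ext_A^i(N, M) \cong \Ext_A^{i+n}(N,N)$. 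Combined with the first shift, this gives the desired comparison, and the hypothesis $\Ext_A^u(M,M) = 0$ for $u > n$ then forces $\Ext_A^j(N,N) = 0$ for $j \geq n+1$.

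With this in hand, decompose $N = N_0 \oplus P$ by Krull-Schmidt (using semi-perfectness of $A$), with $P$ projective and $N_0$ having no projective summand. Since positive Ext involving projectives vanishes for MCM modules, $\Ext_A^j(N_0, N_0) = \Ext_A^j(N, N) = 0$ for $j \geq n+1$. By hypothesis every indecomposable summand of $N_0$ is periodic, so letting $p$ be the least common multiple of their periods we have $\Omega^p N_0 \cong N_0$ in the minimal projective resolution. The shift $\Ext_A^{j+p}(N_0, N_0) \cong \Ext_A^j(\Omega^p N_0, N_0) \cong \Ext_A^j(N_0, N_0)$ for $j \geq 1$ makes the positive-degree Ext groups $p$-periodic, and combined with the vanishing for $j \geq n+1$ this forces $\Ext_A^j(N_0, N_0) = 0$ for all $j \geq 1$, in particular at $j = p$. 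But the periodicity isomorphism itself produces a non-zero class there: the canonical surjection $\alpha : P_p \twoheadrightarrow \Omega^p N_0 = N_0$ is a $p$-cocycle, and a coboundary expression $\alpha = \beta \circ d_p$ would restrict to the identity on $N_0 = \Omega^p N_0 \hookrightarrow P_{p-1} \xrightarrow{\beta} N_0$, making $N_0$ a direct summand of $P_{p-1}$ and hence projective, contradicting the construction of $N_0$. Thus $N_0 = 0$ and $\pdim M \leq n$.

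The main technical hurdle is the first-step comparison: the MCM vanishing $\Ext_A^{\geq 1}(N,A) = 0$ must be threaded cleanly through the iterated long exact sequences, with the index ranges chosen so that the shift is genuinely an isomorphism at each stage. Once this is in place, the collision between $p$-periodicity of $\Ext_A^*(N_0, N_0)$ and the non-vanishing of $\Ext_A^p(N_0, N_0)$ coming from the periodicity isomorphism is automatic.
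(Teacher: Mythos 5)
Your proposal is correct, and its skeleton is the same as the paper's: pass to $N=\Omega^n(M)\in \CM A$, transfer the hypothesis to get $\Ext_A^{j}(N,N)=0$ for $j> n$, use periodicity of the non-projective indecomposable summands to push this vanishing down to all positive degrees, and then contradict it with the non-vanishing of $\Ext_A^{p}$ in the period degree. The difference is in how the two key Ext-comparisons are justified: the paper invokes the stable-category isomorphisms $\Ext_A^p(X,Y)\cong\underline{\Hom}_A(\Omega^p(X),Y)$ and $\underline{\Hom}_A(X,Y)\cong\underline{\Hom}_A(\Omega^s(X),\Omega^s(Y))$ for $X\in\CM A$ (citing Iyama, and using that the identity of a non-projective indecomposable is non-zero in $\underline{\Hom}$), whereas you re-derive both facts by hand from $\Ext_A^{\geq 1}(N,A)=0$: the second-variable dimension shift through the syzygy sequences, and the explicit cocycle argument showing that $\Ext_A^{p}(N_0,N_0)=0$ would split the inclusion $\Omega^p(N_0)\hookrightarrow P_{p-1}$ and force $N_0$ projective. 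This makes your proof self-contained (Lemma~1.1(1)--(2) are not needed) at the cost of a bit more bookkeeping. A small further point in your favour: by splitting $N=N_0\oplus P$ and concluding only that $N$ is projective (rather than zero), you cleanly dispose of possible projective direct summands of $\Omega^n(M)$, to which the periodicity hypothesis does not apply; the paper's proof treats every indecomposable summand of $\Omega^n(M)$ as periodic and concludes $\Omega^n(M)=0$, which is slightly less careful on exactly this point, though the conclusion $\pdim M\leq n$ is the same.
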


In \cite{E} a positive solution to the above problem was proven for representation-finite Iwanaga-Gorenstein Artin algebras using the theory of generalised tilting modules. In question 4.7 of \cite{E} a more direct proof for this case is asked for and our main result gives such a direct proof for a much larger class of Iwanaga-Gorenstein Artin algebras, which contain all CM-finite Iwanaga-Gorenstein rings and in particular the subclass of all such representation-finite algebras.

\section{Proof of the main results}
In this section $\underline{\CM} A$ will denote the category of maximal Cohen-Macaulay modules modulo projective modules.

\begin{lemma} \label{mainlemma}
Assume $A$ is an $n$-Iwanaga-Gorenstein ring and $M \in \CM A$ and $N \in \mod A$.
\begin{enumerate}
\item $\underline{\Hom}_A(M,N) \cong \underline{\Hom}_A(\Omega^s(M),\Omega^s(N))$ for all $s \geq 0$.
\item $\Ext_A^p(M,N) \cong \underline{\Hom}_A(\Omega^p(M),N)$ for all $p \geq 1$.
\item If $M$ is indecomposable, then also $\Omega^i(M) \in \CM A$ is indecomposable for all $i \geq 1$.
\item If $M, N \in \CM A$ are indecomposable and satisfy $\Omega^1(M) \cong \Omega^1(N)$ then $M \cong N$.
\end{enumerate}

\end{lemma}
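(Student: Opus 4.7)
The plan is to prove the four parts in the order (2), (1), (3), (4), exploiting throughout that for an $n$-Iwanaga-Gorenstein ring $\CM A$ is a Frobenius exact category whose projective-injective objects are precisely the finitely generated projective $A$-modules, so the stable category $\underline{\CM} A$ is triangulated with the syzygy functor $\Omega$ acting as an auto-equivalence.

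For (2), dimension shifting reduces the statement to the case $p=1$, since $\Ext_A^p(M,N) \cong \Ext_A^1(\Omega^{p-1} M, N)$ and $\Omega^{p-1} M$ remains in $\CM A$. For the base case, apply $\Hom_A(-,N)$ to $0 \to \Omega M \to P \to M \to 0$ to obtain
\[
\Ext_A^1(M,N) \cong \Hom_A(\Omega M, N)/\mathrm{im}\bigl(\Hom_A(P,N)\bigr).
\]
The technical heart of the argument is to identify this image with the submodule of morphisms that factor through some projective. One inclusion is immediate; for the other, given $\Omega M \to Q \to N$ with $Q$ projective, the defining CM property $\Ext_A^1(M,Q)=0$ makes $\Hom_A(P,Q) \to \Hom_A(\Omega M, Q)$ surjective, so $\Omega M \to Q$ extends to $P \to Q$, and composing with $Q \to N$ realises the original map as the restriction of an element of $\Hom_A(P,N)$.

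For (1), induction on $s$ reduces to the case $s=1$. Apply $\Hom_A(M,-)$ to $0 \to \Omega N \to P_N \to N \to 0$. Since $M \in \CM A$, $\Ext_A^1(M,P_N)=0$, so the long exact sequence yields a surjection $\Hom_A(M,N) \twoheadrightarrow \Ext_A^1(M,\Omega N)$ whose kernel is $\mathrm{im}(\Hom_A(M,P_N))$; this image is exactly the projectively-factoring maps, because any projective target admits a lift through the projective cover $P_N$. Hence $\underline{\Hom}_A(M,N) \cong \Ext_A^1(M,\Omega N)$, and (2) applied to $\Omega N \in \mod A$ identifies the right-hand side with $\underline{\Hom}_A(\Omega M, \Omega N)$.

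For (3) and (4), I would work in $\underline{\CM} A$. If $M$ is indecomposable and non-projective, then $\End_{\underline{\CM} A}(M)$ is a nonzero quotient of the local ring $\End_A(M)$ (local by Krull--Schmidt, which holds since $A$ is semiperfect), hence local; transporting through the auto-equivalence $\Omega$ yields $\End_{\underline{\CM} A}(\Omega M)$ local, so $\Omega M$ is indecomposable in $\underline{\CM} A$. Taking the syzygy via a minimal projective cover ensures $\Omega M$ carries no projective summand, so indecomposability persists in $\CM A$, proving (3). For (4), $\Omega M \cong \Omega N$ in $\CM A$ descends to an isomorphism in $\underline{\CM} A$, which via the equivalence $\Omega$ lifts to $M \cong N$ in $\underline{\CM} A$; since $M$ and $N$ are indecomposable non-projective and therefore have no projective summands, Krull--Schmidt lifts this to an honest isomorphism $M \cong N$ in $\CM A$. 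The main obstacle throughout is the identification in (1) and (2) of the relevant images of Hom-functors with projectively-factoring maps; this is precisely where the Iwanaga-Gorenstein hypothesis (through $\Ext_A^1(M,\text{proj})=0$ for $M \in \CM A$) enters decisively.
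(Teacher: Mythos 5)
Your proofs of (1) and (2) are correct and complete: the identification of the image of $\Hom_A(P,N)\to\Hom_A(\Omega M,N)$ (respectively of $\Hom_A(M,P_N)\to\Hom_A(M,N)$) with the maps factoring through a projective is exactly where the hypothesis $M\in\CM A$ enters, via $\Ext^1_A(M,Q)=0$ for finitely generated projective $Q$, and your lifting arguments are the right ones. This is more self-contained than the paper, which only records that $\Ext_A^i(X,A)=0$ for $X\in\CM A$ and $i>0$ and then quotes \cite[2.1]{I} for these two parts.

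For (3) and (4), however, there is a genuine gap. You invoke Krull--Schmidt, i.e.\ locality of $\End_A(M)$ for every indecomposable finitely generated module, ``since $A$ is semiperfect''. Semiperfectness only guarantees Krull--Schmidt for finitely generated \emph{projective} modules; for arbitrary finitely generated modules over a two-sided noetherian semiperfect ring it fails in general. In particular, every (possibly non-complete) commutative noetherian local ring is semiperfect, and over non-complete Gorenstein local rings there exist indecomposable maximal Cohen--Macaulay modules with non-local endomorphism rings, so your argument does not cover, for instance, the paper's hypersurface example $A=R/(f)$ with $R$ regular local but not complete. The paper sidesteps this by citing results proved precisely at the semiperfect noetherian level without Krull--Schmidt: \cite[Corollary 3.3]{RZ} for (3) and \cite[Theorem 5.5]{KKN} for (4). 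The same issue recurs at the end of your proof of (4), where passing from an isomorphism in $\underline{\CM} A$ to an honest isomorphism in $\CM A$ again uses Krull--Schmidt. A smaller, fixable gap: your assertion that the minimal syzygy of $M$ carries no projective direct summand is not automatic (minimal syzygies of arbitrary modules can have projective summands, e.g.\ over hereditary algebras); for $M\in\CM A$ it is true, but requires an argument, for example that a projection $\Omega M\to Q$ onto a projective summand extends to $P$ because $\Ext^1_A(M,Q)=0$, and since $\Omega M\subseteq\rad P$ this forces $Q=\rad Q$ and hence $Q=0$ by Nakayama. If you restrict to artin algebras or complete local rings, where Krull--Schmidt does hold, your (3) and (4) become the standard argument and are fine; in the paper's stated generality they are not justified as written.
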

\begin{proof}
Note that every module $X \in \CM A$ satisfies $\Ext_A^i(X,A)=0$ for all $i>0$, since $X$ is a direct summand of a module of the form $\Omega^n(Y)$ and $\Ext_A^i(\Omega^n(Y),A)=\Ext_A^{n+i}(Y,A)=0$ for all $i>0$ since $A$ has injective dimension $n$.
Then (1) and (2) are a special case of \cite[2.1]{I}. (3) follows from \cite[Corollary 3.3]{RZ} and (4) is a consequence \cite[Theorem 5.5]{KKN}.

\end{proof}

Recall that a module $X \in \mod A$ is called \emph{periodic} if $\Omega^l(X) \cong X$ for some $l \geq 1$.

\begin{theorem} \label{maintheorem}
Let $A$ be an $n$-Iwanaga-Gorenstein ring such that every indecomposable non-projective module $X \in \CM A$ is periodic. Assume $M$ has the property that $\Ext_A^u(M,M)=0$ for all $u>n$. Then $M$ has finite projective dimension. In particular, all self-orthogonal modules have finite projective dimension. 

\end{theorem}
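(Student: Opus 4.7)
The plan is to show that $X := \Omega^n(M)$ is projective, which will give $\pdim M \leq n$. Since $X \in \CM A$ and decomposes under Krull--Schmidt, it suffices to assume for contradiction that $X$ has an indecomposable non-projective direct summand $Y$, and derive a contradiction; by hypothesis $Y$ is periodic, say $\Omega^l(Y) \cong Y$ for some $l \geq 1$, and the goal becomes forcing $\underline{\End}_A(Y) = 0$.

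First, I would translate the Ext-vanishing hypothesis into a statement in the stable category. By dimension shifting, $\Ext^u_A(M,M) \cong \Ext^{u-n}_A(X, M)$ for $u > n$; then Lemma \ref{mainlemma}(2) applied to $X \in \CM A$ gives $\Ext^{u-n}_A(X, M) \cong \underline{\Hom}_A(\Omega^{u-n}(X), M)$, so the hypothesis is equivalent to $\underline{\Hom}_A(\Omega^p(X), M) = 0$ for every $p \geq 1$. Since $\Omega^p(X) \in \CM A$, Lemma \ref{mainlemma}(1) with shift $s = n$ rewrites this as
\[
\underline{\Hom}_A(\Omega^{p+n}(X), X) = 0 \quad \text{for all } p \geq 1,
\]
i.e., $\underline{\Hom}_A(\Omega^q(X), X) = 0$ for all $q \geq n+1$. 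Both arguments now lie in $\CM A$, which is exactly the setup where periodicity can bite.

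Next I would invoke periodicity. Choose $k$ with $kl \geq n + 1$; since $Y$ is a direct summand of $X$, $\Omega^{kl}(Y) \cong Y$ is a direct summand of $\Omega^{kl}(X)$, so the vanishing above with $q = kl$ forces $\underline{\Hom}_A(Y, X) = 0$. Because $Y$ is itself a summand of $X$, $\underline{\End}_A(Y)$ is a summand of $\underline{\Hom}_A(Y, X)$ and hence vanishes, contradicting that $\mathrm{id}_Y$ is non-zero in $\underline{\CM} A$ for the non-projective module $Y$. The ``in particular'' clause is then immediate, since a self-orthogonal module automatically satisfies $\Ext^u_A(M,M) = 0$ for all $u > n$.

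I expect the main obstacle, conceptually, to be spotting the correct double shift: one uses Lemma \ref{mainlemma}(2) once to convert Ext from $M$ into stable Hom out of $\Omega^p(X)$, and then Lemma \ref{mainlemma}(1) to shift the target from $M$ back to $X$ itself. That second shift is the crucial move, because it is what puts both slots of the stable Hom inside $\CM A$, so that the periodicity hypothesis on the source can be used to loop $\Omega^{kl}(X)$ back onto a summand isomorphic to $Y$. Once that structural point is in place, the argument reduces to a small Krull--Schmidt bookkeeping.
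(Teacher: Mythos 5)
Your proposal is correct and takes essentially the same route as the paper: both use Lemma \ref{mainlemma}(1) and (2) to turn the hypothesis $\Ext_A^u(M,M)=0$ for $u>n$ into the vanishing of stable Hom groups $\underline{\Hom}_A(\Omega^{q}(\Omega^n(M)),\Omega^n(M))$ for $q>n$, and then use periodicity of an indecomposable non-projective summand to wrap a high syzygy back onto that summand and contradict the non-vanishing of its identity in $\underline{\CM} A$. The only difference is bookkeeping: you stay entirely in stable Hom and conclude that $\Omega^n(M)$ is projective (which cleanly absorbs possible projective summands), whereas the paper converts back to Ext groups of $\Omega^n(M)$ against itself and concludes $\Omega^n(M)=0$.
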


\begin{proof}
Assume $M$ has the property that $\Ext_A^{n+l}(M,M)=0$ for all $l \geq 1$.
Then 
$$\Ext_A^{n+l}(M,M) \cong \Ext_A^l(\Omega^n(M),M) \cong \underline{\Hom}_A(\Omega^{l+n}(M),M)$$ by dimension shifting and \ref{mainlemma} (2), which we are allowed to use since $\Omega^n(M) \in \CM A$.
Using \ref{mainlemma} (1) with $s=n$ and then (2) again we obtain:
$$\underline{\Hom}_A(\Omega^{l+n}(M),M)\cong \underline{\Hom}_A(\Omega^{l+n+n}(M),\Omega^n(M))\cong\Ext_A^{n+l}(\Omega^n(M), \Omega^n(M)).$$
Thus $\Ext_A^{n+l}(\Omega^n(M), \Omega^n(M))=0$ for all $l \geq 1$, since we assume that $\Ext_A^{n+l}(M,M)=0$ for all $l \geq 1$. 
Let $X$ be an indecomposable direct summand of $\Omega^n(M)$. Then $X \in \CM A$ with 
$\Ext_A^{n+l}(X, X)=0$ for all $l \geq 1$. Assume that $X$ is non-zero.
By assumption $X$ is periodic. So assume that $X \cong \Omega^q(X)$ for some $q \geq 1$. Note that this also implies that $X \cong \Omega^{qm}(X)$ for all $m \geq 1$.
Then for all $p \geq 1$ and $m \geq 1$ we obtain:
$$\Ext_A^p(X,X) \cong \Ext_A^p(\Omega^{qm}(X),X) \cong \Ext_A^{p+qm}(X,X).$$
Now choose $m$ big enough so that $p+qm >n$.
Then 
$$\Ext_A^p(X,X)\cong\Ext_A^{p+qm}(X,X)=0.$$
Thus $X$ is self-orthogonal.
But we also have by \ref{mainlemma}(2)
$$\Ext_A^q(X,X) \cong \underline{\Hom}_A(\Omega^q(X),X) \cong \underline{\Hom}_A(X,X) \neq 0,$$
since the identity map in $\underline{\Hom}_A(X,X)$ is certainly non-zero.
This is a contradiction and thus $X$ must be zero.
Thus every indecomposable direct summand of $\Omega^n(M)$ is the zero module and thus $\Omega^n(M)$ itself must be the zero module, which implies that $\Omega^n(M)=0$ and $M$ has finite projective dimension.
\end{proof}

Recall that an Iwanaga-Gorenstein ring is called \emph{CM-finite} if there are only finitely many indecomposable modules in $\CM A$.
\begin{corollary}
Let $A$ be a CM-finite Iwanaga-Gorenstein ring.
Then every self-orthogonal module has finite projective dimension.

\end{corollary}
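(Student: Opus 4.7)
The plan is to derive the corollary as an immediate consequence of Theorem~\ref{maintheorem}. For this it suffices to show that, when $A$ is CM-finite, every indecomposable non-projective $X\in\CM A$ is periodic; then the hypothesis of the main theorem is satisfied and the conclusion for self-orthogonal modules follows verbatim.

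So fix an indecomposable non-projective $X\in\CM A$. By Lemma~\ref{mainlemma}(3), each syzygy $\Omega^i(X)$ lies in $\CM A$ and is indecomposable. Since $A$ is CM-finite, $\CM A$ has only finitely many indecomposable objects (up to isomorphism), so by the pigeonhole principle there exist integers $0\le i<j$ with
\[
\Omega^i(X)\cong\Omega^j(X).
\]
Applying Lemma~\ref{mainlemma}(4) $i$ times (i.e.\ cancelling one syzygy at a time) yields $X\cong\Omega^{j-i}(X)$, so $X$ is periodic.

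The only mild subtlety is to make sure that when we cancel syzygies via Lemma~\ref{mainlemma}(4) the intermediate modules really are indecomposable and in $\CM A$; this is exactly what Lemma~\ref{mainlemma}(3) provides (note that $X$ itself need not be a syzygy of a $\CM$ module, but this is not required by (4)). One might also worry that some $\Omega^i(X)$ becomes projective, but this never happens: an indecomposable module in $\CM A$ is either projective or has an indecomposable non-projective syzygy, and if any $\Omega^i(X)$ were projective then $X$ would have finite projective dimension, contradicting the fact that over an Iwanaga-Gorenstein ring an indecomposable maximal Cohen-Macaulay module of finite projective dimension is projective.

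Having verified the periodicity hypothesis, Theorem~\ref{maintheorem} applies to $A$, and in particular any self-orthogonal $M\in\mod A$ has finite projective dimension, which is the desired conclusion. The only potential obstacle is the cancellation step, but it is handled cleanly by iterating Lemma~\ref{mainlemma}(4); no further input is needed.
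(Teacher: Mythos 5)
Your proof is correct and follows essentially the same route as the paper: use Lemma~\ref{mainlemma}(3) together with CM-finiteness and the pigeonhole principle to get $\Omega^i(X)\cong\Omega^j(X)$, cancel syzygies via Lemma~\ref{mainlemma}(4) to conclude $X$ is periodic, and then invoke Theorem~\ref{maintheorem}. Your extra remarks on indecomposability of the intermediate syzygies and on projective syzygies are just added care on points the paper leaves implicit.
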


\begin{proof}
We show that every indecomposable module $X \in \CM A$ is periodic. Then the result follows from \ref{maintheorem}.
Let $X$ be indecomposable. Since with $X$ also $\Omega^i(X) \in \CM A$ is indecomposable for all $i \geq 0$ by \ref{mainlemma} (3) and since there are only finitely many indecomposable modules in $\CM A$, we have that
$\Omega^i(X) \cong \Omega^{i+l}(X)$ for some $i \geq 0$ and $l \geq 1$.
This implies that $X \cong \Omega^l(X)$ by \ref{mainlemma} (4) and thus $X$ is periodic.
\end{proof}
We give two important examples. The first is for finite dimensional algebras and the second for commutative local rings.
\begin{example}
Let $A$ be a finite dimensional quiver algebra $KQ/I$ with admissible monomial ideal $I$.
Then $A$ has the property that $\Omega^2(\mod-A)$ is representation-finite, see \cite{Z}, and thus $A$ is CM-finite if $A$ is Iwanaga-Gorenstein. In particular, all gentle algebras are CM-finite Iwanaga-Gorenstein algebras as gentle quiver algebras are always Iwanaga-Gorenstein by \cite{GR}.
Thus for the class of monomial Iwanaga-Gorenstein algebras, every self-orthogonal module has finite projective dimension by our main result.

\end{example}

\begin{example}
Let $R$ be a regular commutative local ring and $f \neq 0$.
Then the hypersurface ring $A=R/(f)$ is Iwanaga-Gorenstein and every module $X \in \CM A$ is periodic of period at most 2 by the classical result about matrix factorisations by Eisenbud, see \cite{Ei}. By our main result, every self-orthogonal $A$-module has finite projective dimension.
\end{example}


\begin{thebibliography}{Gus}
\bibitem[Ei]{Ei} Eisenbud, D: {\it Homological algebra on a complete intersection, with an Application to Group Representations.}  Transactions of the American Mathematical Society Vol. 260, No. 1 (1980), pp. 35-64.
\bibitem[E]{E} Enomoto, H.: {\it Maximal self-orthogonal modules and a new generalization of tilting modules.} \url{https://arxiv.org/abs/2301.13498}.
\bibitem[GR]{GR} Geiss, C.; Reiten, I.: {\it Gentle algebras are Gorenstein.} In Representations of algebras and related topics. Proceedings from the 10th international conference, ICRA X, Toronto, Canada, 2002. American Mathematical Society (AMS), Fields Institute Communications 45, 129-133 (2005). 
\bibitem[I]{I} Iyama, O.: {\it Auslander correspondence.} Advances in Mathematics
Volume 210, Issue 1, 2007, Pages 51-82.
\bibitem[KKN]{KKN} Kameyama,N; Kimura,Y.; Nishida, K.: {\it On stable equivalences of module subcategories over a semiperfect noetherian ring.} Colloquium Mathematicae (2014), Volume: 137, Issue: 1, page 7-26.
\bibitem[RZ]{RZ} Ringel, C.M.; Zhang, P.: {\it  Gorenstein-projective and semi-Gorenstein-projective modules.} Algebra and Number Theory. Vol. 14 (2020), No. 1, 1-36.
\bibitem[Y]{Y} Yamagata, K.: {\it Frobenius algebras.} In Handbook of algebra. Volume 1. Amsterdam: North-Holland. 841-887 (1996). 
\bibitem[Z]{Z} Zimmermann-Huisgen, B.: {\it Predicting syzygies over monomial relations algebras.} Manuscripta Mathematica, 1991, Volume 70, Issue 1, pp 157-182.
\end{thebibliography}
\end{document}